\documentclass[11pt,a4paper,twoside]{article}
\usepackage[T1]{fontenc}
\usepackage[english]{babel}
\usepackage[utf8]{inputenc}
\usepackage[pdftex]{graphicx}
\usepackage{amsmath,amsthm,amssymb,amsfonts,amsbsy}
\usepackage{lmodern}

\usepackage{color, colortbl} 
\usepackage{float} 
\usepackage{bookmark} 
\usepackage{microtype} 
\usepackage[margin=2.5cm]{geometry} 
\usepackage{booktabs} 
\usepackage{cite}
\usepackage{authblk} 

\theoremstyle{definition}
\newtheorem{definition}{Definition}[section]

\theoremstyle{plain}

\newtheorem{theorem}[definition]{Theorem}
\newtheorem{proposition}[definition]{Proposition}
\newtheorem{corollary}[definition]{Corollary}
\newtheorem{conjecture}[definition]{Conjecture}

\DeclareMathOperator{\GG}{GG}
\DeclareMathOperator{\NGG}{NGG}
\newcommand{\ABC}{{\rm ABC}}
\usepackage{mathtools}
\DeclarePairedDelimiter\ceil{\lceil}{\rceil}
\DeclarePairedDelimiter\floor{\lfloor}{\rfloor}

\title{Remarks on the Graovac-Ghorbani Index of Bipartite Graphs}

\author{
    \large \bf Darko Dimitrov$^a$, Barbara Ikica$^b$, Riste {\v S}krekovski$^c$
    }

\affil{  \normalsize
    $^a${ Hochschule f\"ur Technik und Wirtschaft Berlin, Germany
    }
    \\E-mail: {\tt darko.dimitrov11@gmail.com}
}

\affil{
    $^b${ Faculty of Mathematics and Physics, University of Ljubljana \& \\
    Institute of Mathematics, Physics and Mechanics, Ljubljana, Slovenia}
    \\E-mail: {\tt barbara.ikica@fmf.uni-lj.si}
}

\affil{
    $^c${ Faculty of Information Studies, Novo mesto \& \\
    Faculty of Mathematics and Physics, University of Ljubljana \&  \\ Faculty of Mathematics, Natural Sciences and Information Technologies, \\University of Primorska, Koper, Slovenia}
    \\E-mail: {\tt skrekovski@gmail.com}
}

\date{\today}

\begin{document}

\maketitle

\begin{abstract}
    The atom-bond connectivity ($\ABC$) index is a well-known degree-based molecular structure descriptor with a variety of chemical applications. In $2010$ Graovac and Ghorbani introduced a distance-based analog of this index, the Graovac-Ghorbani ($\GG$) index, which yielded promising results when compared to analogous descriptors. In this paper, we investigate the structure of graphs that maximize and minimize the $\GG$ index. Specifically, we show that amongst all bipartite graphs, the minimum $\GG$ index is attained by a complete bipartite graph, while the maximum $\GG$ index is attained by a path or a cycle-like graph; the structure of the resulting graph depends on the number of vertices. Through the course of the research, we also derive an asymptotic estimate of the $\GG$ index of paths. In order to obtain our results, we introduce a normalized version of the $\GG$ index and call it the normalized Graovac-Ghorbani ($\NGG$) index. Finally, we discuss some related open questions as a potential extension of our work.
\end{abstract}

\noindent
{\bf Keywords:} Molecular structure descriptor; Molecular graph; Extremal graphs; Atom-bond connectivity index; Graovac-Ghorbani index

\medskip

\section[Introduction]{Introduction}

Let $G$ be a simple undirected graph of order $n=|V(G)|$ and size $m=|E(G)|$. The degree of a vertex $v \in V(G)$, denoted by $d(v)$, is the number of edges incident to $v$. Then the {\it atom-bond connectivity (ABC) index} of $G$ is defined as
\begin{equation*}
    \ABC(G)=\sum_{uv\in E(G)}\sqrt{\frac{d(u) +d(v)-2}{d(u)d(v)}}.
\end{equation*}
This degree-based molecular structure descriptor was introduced in 1998 by Estrada, Torres, Rodr{\' i}guez and Gutman~\cite{etrg-abc-98}, who showed that it can be a valuable predictive tool in the study of the heat of formation in alkanes. Later the physico-chemical applicability of the ABC index was confirmed and extended in several studies~\cite{e-abceba-08, dt-cbfgaiabci-10, gg-nwabci-10, gtrm-abcica-12}. In addition, some comparative studies reported results that favour the ABC index over other mathematically similar molecular structure descriptors~\cite{fgd-ssdbti-13, gt-tqmsd-13}. As a consequence, the mathematical properties of the ABC index were studied with increasing interest over the last several years. More results about the mathematical and computational aspect of the ABC index can be found in~\cite{adgh-dctmabci-14, cg-eabcig-11, clg-subabcig-12, d-abcig-10, d-ectmabci-2013, d-sptmabci-2014, d-sptmabci-2-2015, ddf-sptmabci-3-2016, dgf-abci-11, fgiv-cstmabci-12, gfahsz-abcic-2013, gmng-abcitfnl-15, bcpt-nubfta-16, gs-tsaiotwnpv-16, hag-ktwmabci-14, lccglc-fcsftwmaibotdg-14, lcmzczj-otwmaiatwgnol-16, p-arubftai-14} and the references cited therein.

Several natural extensions and variants of the ABC index were introduced with a hope to obtain other successful molecular structure descriptors. Thus, in $2010$, Graovac and Ghorbani~\cite{gg-nwabci-10} proposed the following distance-based analog of the ABC index:
\begin{equation*}
\label{eq:GG}
    \GG(G) = \sum_{uv \in E(G)} \sqrt{\frac{n_u + n_v - 2}{n_u n_v}},
\end{equation*}
where $n_u$ is defined as the number of vertices of $G$ lying closer to $u$ than to $v$ and similarly $n_v$ as the number of vertices of $G$ lying closer to $v$ than to $u$, namely
\begin{equation}
\label{eq:nu_nv}
    n_u = | \{ w \in V(G) : d(w, u) < d(w, v) \} | \quad \text{and} \quad n_v = | \{ w \in V(G) : d(w, v) < d(w, u) \} |.
\end{equation}
Here $d(i, j)$ denotes the distance between vertices $i$ and $j$, i.e., the number of edges on the shortest path connecting $i$ and $j$.

This index was named after its authors as the \emph{Graovac-Ghorbani index}. Since it resembles the ABC index, it is also known as the \emph{second atom-bond connectivity index} and is often denoted by $\mathrm{ABC_2}$. However, as it was indicated in~\cite{f-abcvgga-2016}, this name and notation are inappropriate since the expression $\sqrt{(n_u + n_v - 2)/(n_u n_v)}$ is not related to atom connectivity (as the degree of a vertex) and bond connectivity (as the degree of a bond/edge). Therefore, we refer to this index as the Graovac-Ghorbani index and denote it by $\GG$. Some initial studies indicate that the Graovac-Ghorbani index could be an effective predictive tool in chemistry. For instance, it can be used to model both the boiling and the melting points of molecules~\cite{gw-sabcsms-2016}.

Due to its recentness, the Graovac-Ghorbani index was studied only to a limited extent. In~\cite{dxg-mugrnabci-13} maximal unicyclic graphs with respect to the $\GG$ index were determined. Some upper bounds on the $\GG$ index were presented in~\cite{rsg-sabci-13}. This work was extended in~\cite{rost-sohhag-14}, where lower and additional upper bounds for the $\GG$ index of graphs were given. The authors also determined the trees with minimal and maximal $\GG$ indices. Throughout the paper, we will need the result about trees with minimal $\GG$ indices. For later reference, we state it explicitly.
\begin{theorem}[Rostami, Sohrabi-Haghighat~\cite{rost-sohhag-14}] \label{thm:path-minGG_trees}
    The path $P_n$ is the $n$-vertex tree with the minimum Graovac-Ghorbani index.
\end{theorem}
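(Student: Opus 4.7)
The plan is to prove the claim by an iterative \emph{leaf-path merging} argument. Since every edge of a tree is a bridge, $n_u + n_v = n$ for each $uv \in E(T)$, hence
\[
\GG(T) = \sqrt{n-2}\,\sum_{uv \in E(T)} \frac{1}{\sqrt{n_u(n - n_u)}}.
\]
I would show that whenever $T \neq P_n$, a single local move produces an $n$-vertex tree $T'$ with $\GG(T') < \GG(T)$ and one fewer leaf. Since every $n$-vertex tree has at least two leaves with equality only for $P_n$, iterating terminates at $P_n$, establishing it as the unique minimizer.

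To define the move, suppress all degree-$2$ vertices of $T$ to obtain a reduced tree $T^*$ whose internal vertices have degree $\geq 3$. A short argument---delete the leaves of $T^*$ and observe that every remaining vertex then has degree $\geq 2$, which forces the remainder to be empty---shows that some internal vertex of $T^*$ has at least two leaf-neighbors; equivalently, $T$ has a branch vertex $v$ (of degree $\geq 3$) incident to two pendant paths $v - a_1 - \cdots - a_p$ and $v - b_1 - \cdots - b_q$ ending in leaves. Let $T'$ be obtained by replacing these two arms by the single pendant path $v - a_1 - \cdots - a_p - b_1 - \cdots - b_q$.

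For each edge $e$ outside the two arms, the two components of $T - e$ and of $T' - e$ contain the same vertex sets, since the combined block $\{v, a_1, \ldots, a_p, b_1, \ldots, b_q\}$ lies entirely on the ``$v$-side'' of $e$ in both trees; hence such edges contribute equally to $\GG$. The $p + q$ arm-edges in $T$ yield splits $(i, n - i)$ for $i = 1, \ldots, p$ together with $(j, n - j)$ for $j = 1, \ldots, q$, whereas in $T'$ they yield $(i, n - i)$ for $i = 1, \ldots, p + q$. Writing $f(i) := 1/\sqrt{i(n-i)}$ and $G(k) := \sum_{i=1}^{k} f(i)$,
\[
\frac{\GG(T) - \GG(T')}{\sqrt{n-2}} = G(p) + G(q) - G(p+q),
\]
so everything reduces to proving $G(p) + G(q) > G(p + q)$ whenever $p, q \geq 1$ and $p + q \leq n - 2$.

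This inequality is the main obstacle: because $f$ is U-shaped with minimum at $n/2$, the terms $f(k) - f(k + p)$ are \emph{not} all positive and naive termwise bounds fail. The key identity is the antisymmetry
\[
[f(k) - f(k + p)] + [f(n - p - k) - f(n - k)] = 0,
\]
immediate from $f(i) = f(n - i)$. Rewriting $G(p) + G(q) - G(p + q) = \sum_{k=1}^{q}[f(k) - f(k + p)]$ and partitioning $\{1, \ldots, q\}$ under the involution $k \mapsto n - p - k$, the paired indices cancel in pairs (and fixed points contribute $0$), while the unpaired indices form $\{1, 2, \ldots, n - p - q - 1\}$; each such $k$ lies strictly below $(n - p)/2$, so by monotonicity of $f$ on $(0, n/2]$ the corresponding term is strictly positive. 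The hypothesis $p + q \leq n - 2$ guarantees that this unpaired set is nonempty (it contains $k = 1$), forcing strict positivity of the sum. The remaining steps of the plan are routine structural bookkeeping.
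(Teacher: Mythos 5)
The paper does not actually prove this statement --- it is imported verbatim from Rostami and Sohrabi-Haghighat~\cite{rost-sohhag-14} and used as a black box --- so there is no internal proof to compare against; judged on its own, your argument is correct and self-contained. The reduction is sound: in a tree every edge is a bridge, so each edge has split $(i,n-i)$; your leaf-deletion argument on the reduced tree correctly produces a branch vertex carrying two pendant paths; edges outside the two arms keep their splits after the merge; and the arm edges give exactly $G(p)+G(q)$ versus $G(p+q)$, with $p+q\le n-2$ because the branch vertex has a third neighbour. The heart of the matter, $G(p)+G(q)>G(p+q)$, is handled properly by your symmetry pairing: under $k\mapsto n-p-k$ on $\{1,\dots,q\}$ paired terms cancel and a fixed point contributes zero, while every unpaired index satisfies $k\le\min(q,\,n-p-q-1)$, hence $2k\le n-p-1<n-p$, so $f(k)-f(k+p)>0$; and $k=1$ is always unpaired since $p+q\le n-2$. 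One small imprecision: the unpaired set is $\{1,\dots,\min(q,\,n-p-q-1)\}$, not always $\{1,\dots,n-p-q-1\}$ (when $2q\le n-p-1$ the whole of $\{1,\dots,q\}$ is unpaired); this is harmless, as the bound $k<(n-p)/2$ holds in either case. Your merging argument moreover yields a strict decrease at every step, hence uniqueness of $P_n$ as the minimizer, which is slightly stronger than the statement as quoted.
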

Further upper and lower bounds on the $\GG$ index were presented in~\cite{d-ggig-16}. The $\GG$ indices of several special chemical molecular structures including the unilateral polyomino chain, the unilateral hexagonal chain, the V-phenylenic nanotubes and the nanotori were presented in~\cite{gw-sabcsms-2016}.
Recently, in~\cite{f-abcvgga-2016}, the structure of graphs with maximal $\GG$ was conjectured based on computational results. The relation between the $\GG$ index and the original ABC index was also investigated very recently in~\cite{f-abcvgga-2016, fgd-abmd-16, dmga-cbabcig-16}. In addition, in~\cite{f-abcvgga-2016}, some physico-chemical properties of the $\GG$ index were compared with the physico-chemical properties of a few other well-known distance-based molecular descriptors, and the obtained results suggest that it might be worth investigating the $\GG$ index further.

To simplify the derivation of the results presented in the paper, we find it useful to introduce the notion of a \emph{normalized Graovac-Ghorbani index}. We denote it by $\NGG$, and define it as
\begin{equation*}
    \NGG(G) = \sum_{uv \in E(G)} \frac{1}{\sqrt{n_u n_v}}.
\end{equation*}
This measure is of purely technical importance to us. The Graovac-Ghorbani index $\GG$ can be expressed very conveniently using $\NGG$ when dealing with bipartite graphs.
\begin{proposition}
\label{prop:NGG_bprt}
    Let $G$ be a bipartite graph on $n$ vertices. Then
        \begin{equation*}
            \GG(G) = \NGG(G) \sqrt{n-2}.
        \end{equation*}
\end{proposition}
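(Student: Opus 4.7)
The plan is to reduce the identity to the single observation that, for a bipartite graph, every edge $uv$ satisfies $n_u + n_v = n$. Once that is in hand, the proposition follows by pulling the constant $\sqrt{n-2}$ out of the edge sum.

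First I would fix an edge $uv \in E(G)$ and show that every vertex $w \in V(G)$ is counted in either $n_u$ or $n_v$, with no ``ties''. The key tool is the standard parity argument for bipartite graphs: since $G$ is bipartite, it has no odd cycles, so for the edge $uv$ the vertices $u$ and $v$ lie in different parts of the bipartition. Hence for any $w \in V(G)$, the distances $d(w,u)$ and $d(w,v)$ have opposite parities and in particular are never equal. Combined with the triangle-inequality bound $|d(w,u) - d(w,v)| \le 1$ that follows from $uv$ being an edge, this forces $|d(w,u) - d(w,v)| = 1$ for every $w$. Consequently the sets appearing in~\eqref{eq:nu_nv} partition $V(G)$, giving $n_u + n_v = n$ and therefore $n_u + n_v - 2 = n-2$.

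Second, I would plug this into the definition of $\GG(G)$: for every edge $uv$ the numerator under the square root becomes the same constant $n-2$, so
\begin{equation*}
    \GG(G) = \sum_{uv \in E(G)} \sqrt{\frac{n_u + n_v - 2}{n_u n_v}} = \sqrt{n-2}\,\sum_{uv \in E(G)} \frac{1}{\sqrt{n_u n_v}} = \sqrt{n-2}\,\NGG(G),
\end{equation*}
which is exactly the claimed identity.

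The only nontrivial step is establishing $n_u + n_v = n$ on each edge, and even that is a one-line parity argument. There is no real obstacle; the proposition is essentially bookkeeping once one recognizes that bipartiteness eliminates equidistant vertices across any edge.
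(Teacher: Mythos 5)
Your proof is correct and takes essentially the same approach as the paper: establish $n_u + n_v = n$ for every edge of a bipartite graph and then pull the constant $\sqrt{n-2}$ out of the sum. The only cosmetic difference is that you verify the no-ties claim by a direct parity argument on distances, whereas the paper phrases it as a contradiction (an equidistant vertex would yield an odd cycle); the underlying idea is identical.
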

\begin{proof}
    Following the definition of $n_u$ and $n_v$ in~\eqref{eq:nu_nv}, one can conclude that in the case of a bipartite graph, $n_u + n_v = n$ holds for any edge $uv$ of $G$. Indeed, assume that this does not hold, i.e., assume that $n_u + n_v < n$ for some edge $uv \in E(G)$. Then there exists a vertex $w$ such that $d(u, w) = d(v, w)$ and therefore $G$ contains an odd cycle, which in turn is a contradiction to the fact that $G$ is a bipartite graph.

    Hence, the desired equality holds:
    \begin{equation*}
        \GG(G) = \sum_{uv \in E(G)} \sqrt{\frac{n-2}{n_u n_v}} = \sqrt{n-2} \cdot \NGG(G).
    \end{equation*}
\end{proof}
Notice that Proposition~\ref{prop:NGG_bprt} implies that in any subclass of bipartite graphs of fixed order $n$, the extremal graphs for the $\NGG$ index and the extremal graphs for the $\GG$ index are the same.

In the rest of the paper, we first consider the $\GG$ index of very long paths in Section~\ref{sec:paths}. In Section~\ref{sec:bipartite-graphs}, we present all bipartite graphs with minimal and maximal $\GG$ indices. We conclude and state some open problems in Section~\ref{sec:conclusion}.

\section{Graovac-Ghorbani index of paths} \label{sec:paths}

It is well-known that, in general, the graph with $n$ vertices that minimizes the Graovac-Ghorbani index is the complete graph $K_n$~\cite{f-abcvgga-2016}. From a chemical point of view, it is also interesting to confine oneself to studying trees. By Theorem~\ref{thm:path-minGG_trees}, a path is a tree with minimal $\GG$ index. In~\cite{rost-sohhag-14}, it was shown that a star graph is a tree with maximal $\GG$ index. Nevertheless, graphs with a lower maximum degree tend to be more relevant to applications in chemistry as the maximum degree of a \emph{molecular graph}, that is, a topological representation of a (typically organic) molecule, never exceeds four.

For this reason, we spend some time examining paths. In the limit, as the length of the path tends to infinity, a nice and simple result follows.

\begin{proposition}
    For the path $P_n$ on $n$ vertices, the ensuing relation holds:
        \begin{equation*}
            \lim_{n \rightarrow \infty} \NGG(P_n) = \pi.
        \end{equation*}
\end{proposition}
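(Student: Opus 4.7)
The plan is to evaluate $\NGG(P_n)$ exactly as a sum, recognize it as a Riemann sum for a standard improper integral, and then control the behaviour near the endpoints by a splitting argument.

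First, label the vertices of $P_n$ as $v_1, v_2, \dots, v_n$ along the path. For the edge $uv = v_i v_{i+1}$, the vertices lying strictly closer to $v_i$ are exactly $v_1, \dots, v_i$, and those lying strictly closer to $v_{i+1}$ are $v_{i+1}, \dots, v_n$. Hence $n_u = i$ and $n_v = n-i$, so
\begin{equation*}
    \NGG(P_n) \;=\; \sum_{i=1}^{n-1} \frac{1}{\sqrt{i(n-i)}} \;=\; \frac{1}{n}\sum_{i=1}^{n-1} \frac{1}{\sqrt{(i/n)\bigl(1-(i/n)\bigr)}}.
\end{equation*}

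The right-hand side has the form of a Riemann sum for $\int_0^1 f(x)\,dx$ with $f(x) = 1/\sqrt{x(1-x)}$. A direct computation (for instance, via the substitution $x = \sin^2\theta$, or using the Beta function identity $B(\tfrac{1}{2},\tfrac{1}{2})=\pi$) gives $\int_0^1 f(x)\,dx = \pi$, so this is the target value. The catch is that $f$ is unbounded at both endpoints, so the standard Riemann-sum convergence theorem does not apply directly; this will be the main technical obstacle.

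To get around this, I would fix a small $\epsilon \in (0,\tfrac{1}{2})$ and split the sum (and the integral) into a middle piece, where $\epsilon n \le i \le (1-\epsilon) n$, and two symmetric tail pieces. On the middle piece the function $f$ is continuous on the compact interval $[\epsilon,1-\epsilon]$, so standard Riemann-sum convergence yields
\begin{equation*}
    \frac{1}{n}\sum_{\epsilon n \le i \le (1-\epsilon) n} \frac{1}{\sqrt{(i/n)(1-(i/n))}} \;\longrightarrow\; \int_{\epsilon}^{1-\epsilon} \frac{dx}{\sqrt{x(1-x)}} \quad \text{as } n \to \infty.
\end{equation*}
For the tail pieces, the bound $\sqrt{i(n-i)} \ge \sqrt{in/2}$ valid for $i \le n/2$ gives
\begin{equation*}
    \sum_{i=1}^{\lfloor \epsilon n\rfloor} \frac{1}{\sqrt{i(n-i)}} \;\le\; \frac{\sqrt{2}}{\sqrt{n}}\sum_{i=1}^{\lfloor \epsilon n\rfloor}\frac{1}{\sqrt{i}} \;\le\; \frac{\sqrt{2}}{\sqrt{n}}\cdot 2\sqrt{\epsilon n} \;=\; 2\sqrt{2\epsilon},
\end{equation*}
and an analogous bound applies to the tail $i \ge (1-\epsilon)n$ by the symmetry $i \leftrightarrow n-i$. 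A parallel estimate shows $\int_0^{\epsilon} f(x)\,dx, \int_{1-\epsilon}^1 f(x)\,dx = O(\sqrt{\epsilon})$.

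Combining the three pieces, $\bigl|\NGG(P_n) - \pi\bigr|$ is bounded by $o_n(1) + O(\sqrt{\epsilon})$ uniformly in $n$, so letting $n \to \infty$ and then $\epsilon \to 0$ yields $\lim_{n\to\infty}\NGG(P_n) = \pi$. The only delicate step is the tail control, and as just shown it reduces to the elementary comparison $\sum_{i=1}^{N} i^{-1/2} = O(\sqrt{N})$, so no serious obstacle remains.
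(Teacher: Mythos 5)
Your proposal is correct and follows essentially the same route as the paper: write $\NGG(P_n)=\sum_{i=1}^{n-1}1/\sqrt{i(n-i)}$, view it as a Riemann sum for $\int_0^1 \bigl(x(1-x)\bigr)^{-1/2}\,dx=\pi$, and pass to the limit. The only difference is that the paper explicitly offers just a sketch and appeals to the existence of the improper integral, whereas you supply the missing rigor (the $\epsilon$-splitting and the tail bound $\sum_{i\le N} i^{-1/2}=O(\sqrt{N})$ handling the endpoint singularities), which is a welcome but not conceptually different addition.
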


\begin{proof}
    Let us sketch why the statement is true. We have that
        \begin{equation*}
            \NGG(P_n) = \sum_{i = 1}^{n-1} \frac{1}{\sqrt{i (n-i)}} = \frac{1}{n} \displaystyle\sum_{i = 1}^{n-1} \frac{1}{\sqrt{\frac{i}{n}(1-\frac{i}{n})}}.
        \end{equation*}
    Observe that the latter form of the sum can serve as an approximation of the area bounded by the curves $f(x) = (x(1-x))^{-1/2}$, $x = 0$, $x = 1$ and the $x$-axis. Evidently, the greater the $n$, the more precise the approximation is. The result now follows from the existence of the improper integral $\int_0^1 f(x) dx$, which is known to be equal to $\pi$.
\end{proof}

From the result above, one can easily estimate the value of $\GG(P_n)$ for large $n$.

\begin{corollary}
    Let $P_n$ be a path on $n$ vertices. It holds that
    \begin{equation*}
        \GG(P_n) \sim \pi \sqrt{n-2}.
    \end{equation*}
\end{corollary}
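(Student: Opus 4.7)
The statement is a direct corollary of the preceding two results, so the plan is essentially to chain them together.

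First I would observe that $P_n$ is a tree and hence bipartite. By Proposition~\ref{prop:NGG_bprt} applied to $G = P_n$, we have
\begin{equation*}
    \GG(P_n) = \NGG(P_n)\,\sqrt{n-2}
\end{equation*}
for every $n \geq 2$. This gives an exact identity, not just an asymptotic relation.

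Next I would divide both sides by $\pi\sqrt{n-2}$ to obtain
\begin{equation*}
    \frac{\GG(P_n)}{\pi\sqrt{n-2}} \;=\; \frac{\NGG(P_n)}{\pi}.
\end{equation*}
The previous proposition asserts $\lim_{n\to\infty}\NGG(P_n) = \pi$, so the right-hand side tends to $1$ as $n \to \infty$. This is precisely the meaning of $\GG(P_n) \sim \pi\sqrt{n-2}$, and the proof is complete.

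Since this is really a one-line consequence of two prior results, there is no genuine obstacle; the only thing to be careful about is making the asymptotic notation explicit (i.e., that $f(n)\sim g(n)$ means $f(n)/g(n)\to 1$) so the reader sees exactly why the limit of $\NGG(P_n)$ being $\pi$ translates into the stated equivalence. If one preferred a fully self-contained presentation, one could also briefly remind the reader that the factor $\sqrt{n-2}$ arises because $n_u + n_v = n$ for every edge of a bipartite graph, as established in the proof of Proposition~\ref{prop:NGG_bprt}.
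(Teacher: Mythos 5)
Your proposal is correct and follows exactly the route the paper intends: since $P_n$ is bipartite, Proposition~\ref{prop:NGG_bprt} gives $\GG(P_n) = \NGG(P_n)\sqrt{n-2}$, and the limit $\NGG(P_n) \to \pi$ immediately yields the asymptotic equivalence. The paper leaves this as an evident consequence of the two preceding results, so your write-up simply makes that omitted one-line argument explicit.
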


\section{Extremals among bipartite graphs} \label{sec:bipartite-graphs}

We will now move on to the Graovac-Ghorbani index of more general bipartite graphs and provide some results concerning extremal problems. More specifically, we will identify all bipartite graphs with maximal and minimal $\GG$ indices.

\begin{theorem}
\label{thm:bipartite_max}
    Amongst all bipartite graphs on $n$ vertices, the maximum (normalized) Graovac-Ghorbani index is uniquely attained by the complete bipartite graph $K_{\floor{n/2}, \ceil{n/2}}$.
\end{theorem}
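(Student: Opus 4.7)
The plan is to combine Cauchy--Schwarz with a structural bound coming from bipartiteness. Since Proposition~\ref{prop:NGG_bprt} reduces the extremal problems for $\GG$ and $\NGG$ on bipartite graphs to the same question, I would work exclusively with $\NGG$. Starting from
$$
\NGG(G)^2 = \Biggl(\sum_{uv \in E(G)} \frac{1}{\sqrt{n_u n_v}}\Biggr)^{\!2} \leq |E(G)| \sum_{uv \in E(G)} \frac{1}{n_u n_v}
$$
by Cauchy--Schwarz, the target is to prove that the right-hand sum is at most $1$. Coupled with the elementary fact $|E(G)| \leq \lfloor n/2 \rfloor \lceil n/2 \rceil$ for a bipartite graph on $n$ vertices, this would yield $\NGG(G) \leq \sqrt{\lfloor n/2 \rfloor \lceil n/2 \rceil}$, which coincides with $\NGG(K_{\lfloor n/2 \rfloor, \lceil n/2 \rceil})$ since every edge of the complete bipartite graph has $n_u n_v = \lfloor n/2 \rfloor \lceil n/2 \rceil$, so the bound is attained.

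For the substantive inequality $\sum_{uv \in E(G)} \tfrac{1}{n_u n_v} \leq 1$, I would exploit the partial-fraction identity
$$
\frac{1}{n_u n_v} = \frac{1}{n}\Bigl(\frac{1}{n_u} + \frac{1}{n_v}\Bigr),
$$
which follows directly from $n_u + n_v = n$ (Proposition~\ref{prop:NGG_bprt}). Reorganising the sum $\sum_{uv}(1/n_u + 1/n_v)$ by endpoints rewrites it as $\sum_{u \in V(G)} \sum_{v \in N(u)} \tfrac{1}{n_u(v)}$, where $n_u(v)$ denotes $n_u$ relative to the edge $uv$. It therefore suffices to prove $\sum_{v \in N(u)} \tfrac{1}{n_u(v)} \leq 1$ for every vertex $u$.

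The key observation --- and the place where I expect the real work to sit --- is the vertex-wise inequality $n_u(v) \geq d(u)$, valid for every edge $uv$ of a bipartite graph. The proof is short: $u$ itself is strictly closer to $u$ than to $v$; and any other neighbour $v' \in N(u) \setminus \{v\}$ lies in the same part of the bipartition as $v$ (since both are adjacent to $u$), so $d(v',v) \geq 2$ while $d(v',u) = 1$, placing $v'$ in the set counted by $n_u(v)$. These $d(u)$ vertices are distinct, giving the claim and, by averaging, $\sum_{v \in N(u)} \tfrac{1}{n_u(v)} \leq d(u)\cdot \tfrac{1}{d(u)} = 1$, as required.

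Finally, for uniqueness: strict inequality $|E(G)| < \lfloor n/2 \rfloor \lceil n/2 \rceil$ already forces $\NGG(G)^2 < \lfloor n/2 \rfloor \lceil n/2 \rceil$, so any extremal $G$ must saturate the edge count; among bipartite graphs on $n$ vertices this singles out $K_{\lfloor n/2 \rfloor, \lceil n/2 \rceil}$, which the direct computation above confirms attains the bound. Thus it is indeed the unique maximiser.
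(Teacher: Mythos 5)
Your argument is correct, but it follows a genuinely different route from the paper's. The paper applies Cauchy--Schwarz by splitting the summand across the two endpoints, $\NGG^2(G) \le \bigl(\sum_{uv \in E(G)} 1/n_u\bigr)\bigl(\sum_{uv \in E(G)} 1/n_v\bigr)$ with $u \in U$, $v \in V$, then invokes exactly the structural inequality you prove ($n_u \ge d(u)$, via the vertex itself and its neighbours other than $v$) to replace $n_u$, $n_v$ by degrees; regrouping each sum by its endpoint collapses the two factors to $n_1$ and $n_2$, and $n_1 n_2 \le \floor{n/2}\ceil{n/2}$ finishes the bound. You instead use Cauchy--Schwarz with the all-ones vector, $\NGG^2(G) \le |E(G)| \sum_{uv \in E(G)} 1/(n_u n_v)$, and control the second factor via the partial-fraction identity coming from $n_u + n_v = n$ together with the same vertex-wise regrouping and the same lemma $n_u(v) \ge d(u)$, which yields the nice intermediate fact $\sum_{uv \in E(G)} 1/(n_u n_v) \le 1$; the edge-count bound $|E(G)| \le \floor{n/2}\ceil{n/2}$ for bipartite graphs then completes the estimate. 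So the key structural lemma is shared, but the decomposition is different, and your route buys a crisper uniqueness step: equality forces $|E(G)| = \floor{n/2}\ceil{n/2}$, which among bipartite graphs on $n$ vertices already pins down $K_{\floor{n/2},\ceil{n/2}}$, whereas the paper leaves the equality analysis of its two inequalities more implicit. One small point to make explicit: like the paper, you need $G$ connected (and without isolated vertices) for $n_u + n_v = n$ and for the attainment computation; this is harmless since the index is defined via distances and the paper likewise works with connected bipartite graphs, but it deserves a sentence.
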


\begin{proof}
    Choose an arbitrary connected bipartite graph $G$ whose vertex set is divided into two disjoint sets $U$ and $V$ such that $|U|=n_1$ and $|V|=n_2$ with $n_1 + n_2 = n$. Moreover, without loss of generality, assume that for every edge $uv \in E(G)$, inclusions $u \in U$ and $v \in V$ hold. Let us investigate the square of the normalized Graovac-Ghorbani index and find an upper bound for it. Note that, by the Cauchy-Schwarz inequality,
    \begin{equation*}
        \NGG^2(G) = \left( \sum_{uv \in E(G)} \frac{1}{\sqrt{n_u}} \cdot \frac{1}{\sqrt{n_v}} \right)^2 \leq \left( \sum_{uv \in E(G)} \frac{1}{n_u} \right) \left( \sum_{uv \in E(G)} \frac{1}{n_v} \right),
    \end{equation*}
    where we assume $u \in U$ and $v \in V$.

    Furthermore, observe that for each edge $uv \in E(G)$, inequalities $n_u \geq d_u$ and $n_v \geq d_v$ have to be fulfilled as the set of vertices lying closer to either of the vertices (in comparison to the other vertex) has to contain at least the vertex itself and all its immediate neighbours except for $v$ (resp. $u$). The latter is an evident consequence of the bipartiteness of $G$.

    Hence,
    \begin{equation*}
        \left( \sum_{uv \in E(G)} 1/n_u \right) \left( \sum_{uv \in E(G)} 1/n_v \right) \leq \left( \sum_{uv \in E(G)} 1/d_u \right) \left( \sum_{uv \in E(G)} 1/d_v \right) = n_1 \cdot n_2.
    \end{equation*}
    The last equality follows from the fact that each vertex $u \in U$ contributes exactly one to the first sum and similarly each vertex $v \in V$ contributes one to the second sum.

    Under the constraint that $n_1+n_2=n$, the product $n_1 \cdot n_2$ attains its maximum when $n_1=\floor{n/2}$ and $n_2=\ceil{n/2}$ (or vice versa). Notice that $n_1 = n_2 = n/2=\floor{n/2}= \ceil{n/2}$ when $n$ is even. Thus, the maximum of $\NGG^2(G)$ and, consequently, the maximum of $\NGG(G)$ can only be achieved when $G$ is a complete bipartite graph $K_{\floor{n/2}, \ceil{n/2}}$. By Proposition~\ref{prop:NGG_bprt}, the same result follows for the $\GG$ index.
\end{proof}

As an immediate consequence of Theorem~\ref{thm:bipartite_max}, we have the following corollary.

\begin{corollary}
    The maximum values of the normalized Graovac-Ghorbani index and the Graovac-Ghorbani index over the set of all bipartite graphs on $n$ vertices are
        \begin{equation*}
           \NGG(G) = \sqrt{\floor{n/2} \ceil{n/2}} \quad \text{and} \quad
            \GG(G) = \sqrt{(n-2) \floor{n/2} \ceil{n/2}},
        \end{equation*}
    respectively. Both values are achieved if and only if $G$ is the complete bipartite graph $K_{\floor{n/2}, \ceil{n/2}}$.
\end{corollary}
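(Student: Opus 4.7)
The plan is straightforward: Theorem~\ref{thm:bipartite_max} already does the hard work of identifying $K_{\floor{n/2},\ceil{n/2}}$ as the unique extremizer of $\NGG$ (and hence, via Proposition~\ref{prop:NGG_bprt}, of $\GG$) among bipartite graphs on $n$ vertices. All that remains is to evaluate $\NGG$ on this specific graph and then invoke Proposition~\ref{prop:NGG_bprt} to convert the value to a $\GG$ value.

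To compute $\NGG(K_{a,b})$ for general $a,b$ with $a+b=n$, I would pick an arbitrary edge $uv$, say with $u$ in the part of size $a$ and $v$ in the part of size $b$, and inspect the distances from the remaining $n-2$ vertices. Any other vertex in $u$'s part has distance $2$ to $u$ (through $v$) and distance $1$ to $v$, so it is counted on $v$'s side; symmetrically, any other vertex in $v$'s part is counted on $u$'s side. Together with $u$ and $v$ themselves, this gives $n_u = 1 + (b-1) = b$ and $n_v = 1 + (a-1) = a$, so $n_u n_v = ab$ for \emph{every} edge. Since $|E(K_{a,b})| = ab$, summing over edges yields
\begin{equation*}
    \NGG(K_{a,b}) \;=\; ab \cdot \frac{1}{\sqrt{ab}} \;=\; \sqrt{ab}.
\end{equation*}

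Specializing to $a = \floor{n/2}$, $b = \ceil{n/2}$ gives the first claimed formula, and applying Proposition~\ref{prop:NGG_bprt} multiplies it by $\sqrt{n-2}$ to yield the second. The uniqueness of the extremizer is inherited directly from Theorem~\ref{thm:bipartite_max}. There is no genuine obstacle here; the only point worth being careful about is the elementary distance calculation that pins down $n_u$ and $n_v$ on each edge of $K_{a,b}$, which relies on the fact that in a complete bipartite graph every pair of vertices on the same side lies at distance exactly two.
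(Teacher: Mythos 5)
Your proposal is correct and follows the same route the paper intends: the corollary is stated there as an immediate consequence of Theorem~\ref{thm:bipartite_max}, and your contribution is simply to make explicit the computation $n_u=b$, $n_v=a$ for every edge of $K_{a,b}$, giving $\NGG(K_{a,b})=ab/\sqrt{ab}=\sqrt{ab}$, after which Proposition~\ref{prop:NGG_bprt} and the uniqueness statement of the theorem finish the job. This matches the paper's argument, just spelled out in more detail.
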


A natural problem that arises from the abovementioned theorem is the characterization of bipartite graphs that minimize the $\GG$ index.

For the purpose of the next theorem, which provides an answer to the problem posed, we introduce the notion of a \emph{cycle with a hook}, that is, a graph with an odd number of vertices $n$ comprised of two even cycles $C_{n-1}$ and $C_4$ which share three common vertices and two common edges, and write $C''_n$. Moreover, we denote by $C'_n$ a \emph{cycle with a pendant edge}, that is, an unicyclic graph with an odd number of vertices $n$ comprised of a cycle $C_{n-1}$ and a pendant vertex. See Figure~\ref{fig-Cn-plus1plus2} for an illustration of $C'_n$ and $C''_n$.

The {\em cyclomatic number} (or the {\em circuit rank}) of an undirected graph $G$ is the minimum number of edges whose removal from $G$ breaks all its cycles, making it into a tree or a forest. The cyclomatic number $r$ can be expressed as $r = |E(G)| - |V(G)| + |C(G)|$, where $|C(G)|$ is the number of connected components of $G$. Since we are interested in connected graphs, we will assume that $|C(G)| = 1$.
\begin{figure}[H]
    \begin{center}
        \includegraphics[scale=0.65]{./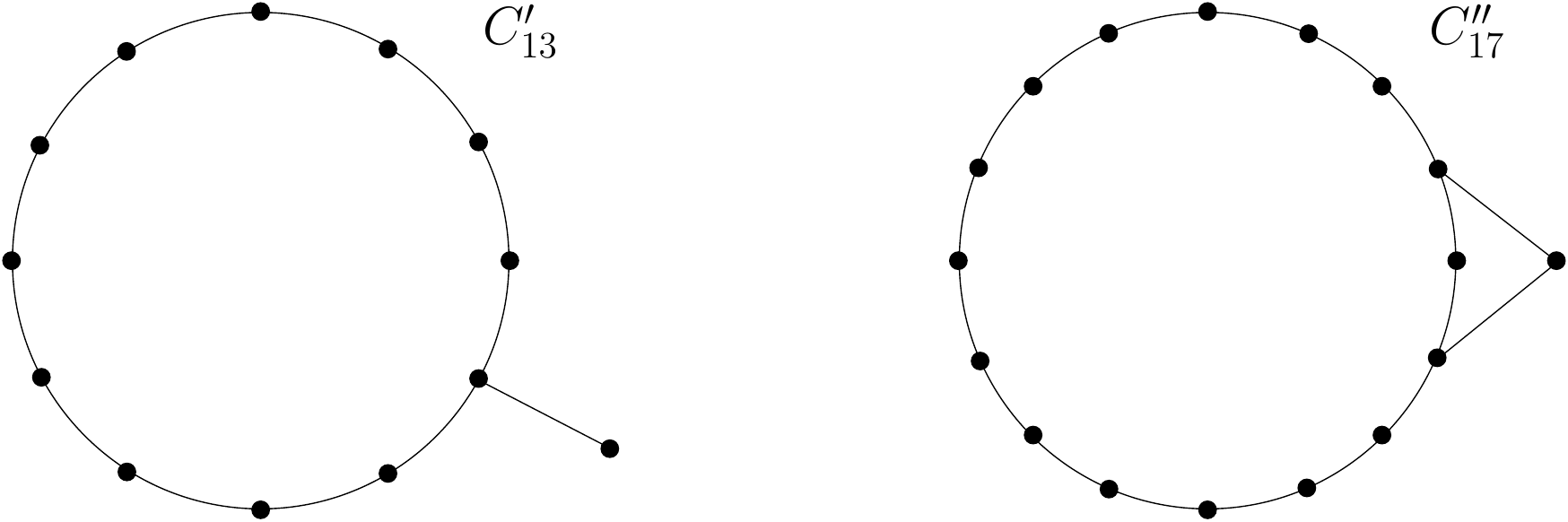}
        \caption{Examples of graphs $C'_n$ and $C''_n$.}
        \label{fig-Cn-plus1plus2}
    \end{center}
\end{figure}
\begin{theorem}
\label{thm:min-bipartite}
    Amongst all bipartite graphs on $n \geq 8$ vertices, the minimum Graovac-Ghorbani index is attained by the cycle $C_n$ for even $n$, by $C'_n$ for odd $n \leq 15$ and by $C''_n$  for odd $n \geq 17$.
    For $n < 8$, the graph that minimizes the Graovac-Ghorbani index is the path $P_n$ on $n$ vertices.
    Furthermore, these are the unique graphs with the respective properties.
\end{theorem}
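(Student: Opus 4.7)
By Proposition~\ref{prop:NGG_bprt} it suffices to identify the minimizers of $\NGG$. The key observation is that for a bipartite graph the identity $n_u + n_v = n$ reduces each edge contribution $1/\sqrt{n_u n_v}$ to a strictly convex function of $n_u \in \{1, \dots, n-1\}$ that is symmetric about $n/2$, with minimum value $2/n$ attained at $n_u = n/2$. This yields the universal lower bound
\[ \NGG(G) \geq \frac{2m}{n}, \qquad m = |E(G)|, \]
which is tight only when every edge bisects $V(G)$.

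The plan is to stratify bipartite graphs by the cyclomatic number $r = m - n + 1$, identify the $\NGG$-minimizer in each stratum, and then compare across strata. For $r = 0$ (trees), Theorem~\ref{thm:path-minGG_trees} gives $P_n$. For $r = 1$ and even $n$, the cycle $C_n$ attains the bound $\NGG(C_n) = 2$ and is therefore the unique optimum. For $r = 1$ and odd $n$, bipartiteness forces the unique cycle to have even length $\ell \leq n-1$, so $G$ is $C_\ell$ with trees attached; a local exchange argument (compressing an attached tree into a single pendant and then lengthening the cycle) should strictly decrease $\NGG$, leaving $C'_n$ as the unique minimizer. For $r = 2$ and odd $n$, the candidate is $C''_n$: its $n-1$ cycle edges split as $((n-1)/2,(n+1)/2)$ and the two hook edges split essentially as well, so this is the best achievable with one edge beyond bipartite-unicyclic. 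For $r \geq 3$ the naive lower bound $2m/n = 2 + 2r/n$ grows and, for $n$ sufficiently large, already exceeds $\NGG(C_n)$, $\NGG(C'_n)$ and $\NGG(C''_n)$, eliminating these strata.

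The main obstacle is the uniqueness claim within the strata $r \in \{1,2\}$ for odd $n$. This requires structural arguments showing that any deviation from $C'_n$ (respectively $C''_n$) --- lengthening a pendant into a longer path, relocating the hook, inserting a chord elsewhere, or redistributing tree vertices along the cycle --- strictly increases $\NGG$. Tracking how each such local modification alters the multiset of splits $\{n_u : uv \in E(G)\}$ is the delicate technical ingredient, since a modification that improves some edges may worsen others.

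Once this is in place, the final step is the numerical cross-stratum comparison. Direct computation yields $\NGG(C'_n) = 2\sqrt{(n-1)/(n+1)} + 1/\sqrt{n-1}$ and, analogously edge by edge, a closed form for $\NGG(C''_n)$. Combined with $\NGG(P_n) = \sum_{i=1}^{n-1} 1/\sqrt{i(n-i)}$ and $\NGG(C_n) = 2$, these formulas pin down the threshold $n = 8$ below which $P_n$ wins and, for odd $n$, the threshold $n = 17$ at which $C''_n$ overtakes $C'_n$; the remaining small cases are checked by a short direct calculation.
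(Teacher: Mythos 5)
Your overall strategy is the same as the paper's (pass to $\NGG$ via Proposition~\ref{prop:NGG_bprt}, use per-edge lower bounds on $1/\sqrt{n_u n_v}$, stratify by cyclomatic number, compare the candidates numerically), but the parts you defer are exactly the mathematical content of the theorem, so as written there is a genuine gap. For odd $n$ and $r=1$ you only say that a local exchange (compressing attached trees to a pendant, lengthening the cycle) ``should'' strictly decrease $\NGG$; this is not established, and such exchange arguments are delicate here precisely because any local modification changes the splits $n_u,n_v$ of many edges simultaneously, so monotonicity under the move is not evident. The paper avoids this entirely with a static bound: a pendant edge contributes exactly $1/\sqrt{n-1}$ and every other edge at least $1/\sqrt{k(k+1)}$ (with $n=2k+1$), which already sums to $\NGG(C'_n)$, and uniqueness follows because a second leaf or a pendant path of length two forces a strictly larger contribution. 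For odd $n$ and $r=2$ you simply assert that $C''_n$ is ``the best achievable''; the paper instead bounds $\NGG(G)\geq (2k+2)/\sqrt{k(k+1)}=\NGG(C''_n)$ for every non-unicyclic bipartite graph (this also disposes of all $r\geq 3$ at once, with no ``sufficiently large $n$'' caveat — note the theorem needs all $n\geq 8$), and then proves uniqueness by showing an equality candidate has no leaves and cyclomatic number $2$, reducing to three explicit configurations ($\Theta$-graphs, two cycles sharing a vertex, two cycles joined by a path) and exhibiting in each non-$C''_n$ case an edge whose split is strictly more unbalanced than $(k,k+1)$. None of this case analysis appears in your plan, and you yourself flag it as ``the main obstacle.''

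A second, smaller gap is the cross-stratum comparison with $P_n$. Since $\NGG(P_n)=\sum_{i=1}^{n-1}1/\sqrt{i(n-i)}$ has no closed form, saying the formulas ``pin down the threshold $n=8$'' with ``small cases checked directly'' does not cover the infinitely many $n$: one must prove $\NGG(P_n)>2$ for all even $n\geq 8$ and $\NGG(P_n)>\NGG(C'_n)$ for all odd $n\geq 9$ (the naive bound $\NGG(P_n)\geq 2(n-1)/n$ is too weak, falling below $2$). The paper does this by isolating the few most unbalanced terms of the path sum, verifying one explicit inequality for them, and bounding the remaining $n-5$ terms by $2/n$ (respectively $1/\sqrt{k(k+1)}$) each. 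If you replace your exchange heuristic and the $r=2$ assertion by the per-edge bounds and the cyclomatic-number-$2$ case analysis, and add the explicit path comparison, your outline becomes essentially the paper's proof.
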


\begin{proof}
    Let $G$ denote a bipartite graph on $n$ vertices that minimizes the $\GG$ index.
    The paths $P_2$ and $P_3$ are the only connected bipartite graphs on $2$ and $3$ vertices respectively.
    For the rest of the proof we will therefore assume that $n \geq 4$.
    Again, it suffices to consider the normalized index $\NGG$ only as we already know that $n_u + n_v = n$ holds for every edge $uv$ in a bipartite graph.

    The calculations of the normalized indices of the cycle $C_n$, the cycle with a pendant edge $C'_n$, and the cycle with a hook $C''_n$ are straightforward, hence we omit them. We will show that the minimum value of the normalized index in the class of all bipartite graphs indeed is attained by these three graphs for $n \geq 8$ and by $P_n$ for $n < 8$. According to the parity of $n$, we split the proof in the ensuing two cases.

    \textbf{Case 1:} \emph{$n$ is even}, $n = 2k, k \in \mathbb{N}$, $k > 1$. Note that $1/\sqrt{n_u n_v} \geq 1/k$ for every edge $uv$ in $G$.

    Suppose first that $G$ is not a tree.
    Then $G$ contains at least $n$ edges. Consequently, $\NGG(G) \geq n/k = 2 = \NGG(C_n)$, where equality holds whenever $|E(G)|= n$ and $1/\sqrt{n_u n_v} = 1/k$ for all $uv \in E(G)$. The first condition amounts to $G$ being unicyclic whilst the second one forces $G$ to be precisely the cycle $C_n$. Otherwise, $G$ would contain a leaf $w \in V(G)$. Evidently, $1/\sqrt{n_u n_w} = 1/\sqrt{2k-1} > 1/k$ holds for the vertex $u$ that shares a common edge with a leaf $w$. However, this cannot be as $G$ is supposed to be the graph that minimizes $\NGG(G)$. Hence, a contradiction.

    On the other hand, if $G$ is a tree, it has to be a path $P_n$ on $n$ vertices by Theorem~\ref{thm:path-minGG_trees}.
    Thus, our proof boils down to determining for which $n$ the inequality $\NGG(P_n) > \NGG(C_n)$, that is,
    \begin{equation}
    \label{eq:NGGPn>NGGCn}
        1/\sqrt{(n-1)} \, + \, 1/\sqrt{2(n-2)} \,  + \, \cdots \, + \, 1/\sqrt{(n-1)} \, > \, 2/n \, + \, 2/n \, +\,  \cdots\,  + \, 2/n \, = \, 2,
    \end{equation}
    holds. To find out, we use the inequality
    \begin{equation*}
        2/\sqrt{(n-1)} + 2/\sqrt{2(n-2)} > 5 \cdot 2/n,
    \end{equation*}
    which is valid for all $n \geq 7$. By subtracting it from the inequality~\eqref{eq:NGGPn>NGGCn}, we are left with an inequality with $n-5$ terms on each side. The left-hand side of the resulting inequality clearly exceeds its right-hand side as $1/\sqrt{i(n-i)} \geq 2/n$ for all $0 < i < n$ with equality if and only if $i = n/2$. Therefore, $\NGG(P_n) > \NGG(C_n)$ for all (even) $n \geq 7$.

    One can verify that for $4 \leq n < 7$, inequality $\NGG(P_n) < \NGG(C_n)$ holds.
    For a comparison between the normalized Graovac-Ghorbani index of the cycle $C_n$, which is equal to $2$ for even $n$, and the normalized Graovac-Ghorbani index of the path $P_n$ (for even $n$), we refer the reader to Table~\ref{table:NGG-Pn}.
    \begin{table}[H]
    \centering
        \begin{tabular}{cccccccc}
            \toprule
            $n$ & 4 & 6 & 8 & 10 & 12 & 14 & 16 \\
            $\NGG(P_n)$ & 1.6547 & 1.9349 & 2.0997 & 2.2114 & 2.2934 & 2.3570 & 2.4081 \\
            \midrule
            & 18 & 20 & 22 & 24 & 26 & 28 & 30 \\
            & 2.4504 & 2.4862 & 2.5169 & 2.5436 & 2.5672 & 2.5882 & 2.6071 \\
            \bottomrule
        \end{tabular}
        \caption{Numerical values of $\NGG(P_n)$ for even $n$, $4 \leq n \leq 30$.}
        \label{table:NGG-Pn}
    \end{table}

    Thus, we have shown that among bipartite graphs on an even number of vertices $n$, the minimum $\GG$ index is attained by the path $P_n$ for $n < 8$ and the cycle $C_n$ for $n \geq 8$.

    \textbf{Case 2:} \emph{$n$ is odd}, $n = 2k+1$, $k \in \mathbb{N}$, $k > 1$. We will divide the analysis into two subcases:

    \textbf{Subcase 2.1:} \emph{$G$ is not a tree.} Hence, it contains a cycle. First of all, we will devote our attention to $G$ being unicyclic. Clearly, its set of leaves has to be non-empty as the cycle $C_n$ on $n = 2k+1$ vertices fails to be bipartite.

    Let $w$ be a leaf of $G$. As already discussed in the previous case, $1/\sqrt{n_u n_w} = 1/\sqrt{n-1} = 1/\sqrt{2k}$ for every edge $uw \in E(G)$ with a leaf $w$ as one of its endpoints. Moreover, $1/\sqrt{n_u n_v} \geq 1/\sqrt{k(k+1)}$ for every (other) edge $uv \in E(G)$. Thus,
    \begin{equation*}
        \NGG(G) \geq 1/\sqrt{2k} + 2k/\sqrt{k(k+1)} = \NGG(C'_n).
    \end{equation*}
    As a result, the cycle with a pendant edge $C'_n$ has the lowest normalized $\GG$ index in the class of unicyclic bipartite graphs. Indeed, all other graphs in the same class contain a pendant path of length at least two (and hence an edge that contributes $1/\sqrt{2(n-2)}$, which in this case is strictly greater than $1/\sqrt{k(k+1)}$) or at least two leaves, and thus have a higher normalized $\GG$ index. Notice that the order of $C'_n$ is at least $5$.

    Let us now focus on graphs which may contain more than one cycle. If $G$ is not unicyclic, $|E(G)| \geq 2k+2$ and so $\NGG(G) \geq (2k+2)/\sqrt{k(k+1)} = \NGG(C''_n)$. We need to verify that $C''_n$ is the only graph that attains this lower bound. Observe that any alternative graph $G$ which fulfills this condition has the following properties: it has precisely $2k+2$ edges and none of its vertices is a leaf as $1/\sqrt{2k} > 1/\sqrt{k(k+1)}$ for $k \geq 2$ (the case $k = 1$ is not of our interest as it corresponds to an odd cycle, which is not a bipartite graph). Thus, $d(v) \geq 2$ for each $v \in V(G)$.
    Since $G$ has $2k+1$ vertices, $2k+2$ edges and one connected component, its cyclomatic number is $2$. There are three possible classes of graphs with cyclomatic number $2$, one connected component and no pendant paths, and they are all illustrated in Figure~\ref{fig-cyclomatic-nr2}. We denote them by Configuration $(a)$, $(b)$ and $(c)$, and consider them separately.

    \begin{figure}[!ht]
    \begin{center}
        \includegraphics[scale=1.1]{./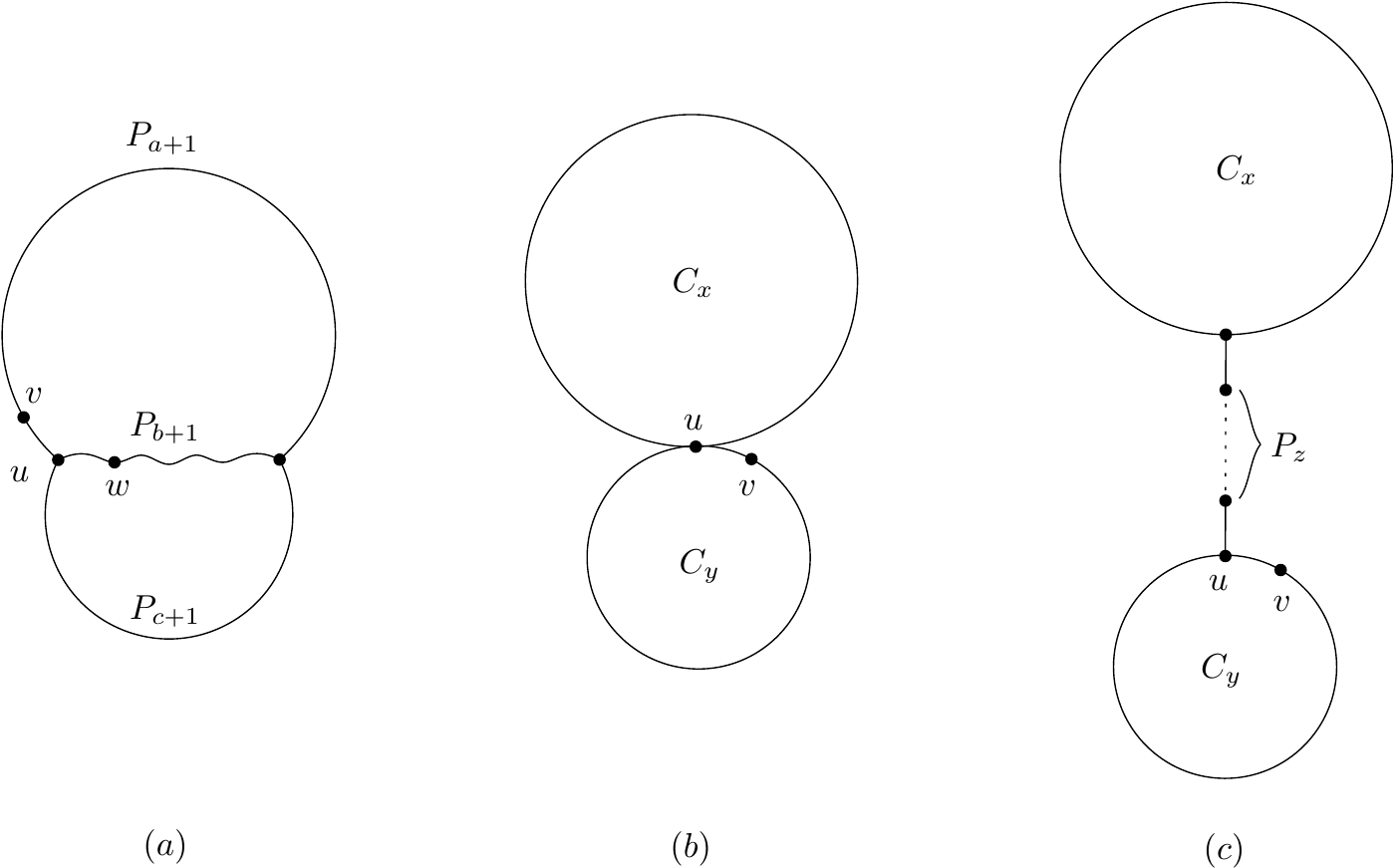}
        \caption{Three possible configurations of graphs with cyclomatic number $2$, one connected component and no pendant paths. }
        \label{fig-cyclomatic-nr2}
    \end{center}
    \end{figure}

    \begin{itemize}

    \item {\it Configuration $(a)$.}

        This configuration is also known as a $\Theta$-graph, that is, a graph that consists of three paths $P_{a+1}$, $P_{b+1}$ and $P_{c+1}$ with shared endpoints. The lengths of these paths are $a, b$ and $c$ respectively. Taking into account that $a + b + c = 2k+2$ and that $a \equiv b \equiv c \pmod{2}$ (which is a consequence of bipartiteness of $G$), we deduce that the lengths $a$, $b$ and $c$ should be even. We assume that $a \geq b \geq c$.

        If two of these lengths equal two, then $G$ is isomorphic to $C''_n$.
        Consider now the case where two of the lengths $a, b, c$ are larger than two, i.e., two of them are at least four. Together with $a \geq b \geq c$, we can conclude that $a, b \geq 4$.
        Here we distinguish three possibilities.

        Suppose first that $b >c$. In this case, observe the edge $uw$ of the graph $G$ depicted in Figure~\ref{fig-cyclomatic-nr2}($a$).
        We have that $n_u = (b+c)/2+a-1$ and $n_w=(b+c)/2$. Since $a \geq 4$, it follows that $n_u > k+1$ and $n_w < k$.
        Consequently, $1/\sqrt{n_u n_w} > 1/\sqrt{k(k+1)}$ and therefore $\NGG(G) > \NGG(C''_n)$.

        Now suppose that $a >b$. Here we assume that $b=c$, since otherwise we are in the preceding case.
        Now observe the edge $uv$ of the graph $G$ in Figure~\ref{fig-cyclomatic-nr2}($a$). Evidently, equalities $n_u = (a+b)/2+c-1$ and $n_v = (a+b)/2$ hold. Recall that $b \geq 4$ and thus $c \geq 4$. It follows that $n_u > k+1$ and $n_v < k$, and subsequently $1/\sqrt{n_u n_v} > 1/\sqrt{k(k+1)}$. Hence, $\NGG(G) > \NGG(C''_n)$.

        Finally, we may assume that $a=b=c$. Similarly as in the case $b > c$, we consider the edge $uw$ of $G$. Obviously, $n_u = 2a-1$ and $n_w=a$. Yet again, this edge contributes more than $1/\sqrt{k(k+1)}$ to $\NGG(G)$ and thus $\NGG(G) > \NGG(C''_n)$.

    \item {\it Configurations $(b)$ and $(c)$.}

        The analyses of these two configurations are similar, so we study them together.
        Configuration $(b)$ consists of two cycles $C_x$ and $C_y$ of even order that share a common vertex while Configuration $(c)$ is a graph comprised of two cycles $C_x$ and $C_y$ of even order and a path $P_z$ of odd order.
        Indeed, observe that both cycles $C_x$ and $C_y$ must be of even order, otherwise the graph is not bipartite.
        Consequently, because $G$ is of odd order, the path $P_z$ must be of odd order too.
        We may assume that in both configurations the order of $C_x$ is at least as large as the order of $C_y$, i.e., $x \geq y$.
        Let us now consider the edges denoted by $uv$.
        For the edge $uv$ of Configuration $(b)$, it holds that $n_u = x-1+y/2$ and $n_v=y/2$, and for the edge $uv$ of Configuration $(c)$, we have $n_u = x+z+y/2$ and $n_v = y/2$.
        Since $x \geq 4$, it follows that in both cases inequalities $n_u > k+1$ and $n_w < k$ hold, and therefore $1/\sqrt{n_u n_v} > 1/\sqrt{k(k+1)}$ and $\NGG(G) > \NGG(C''_n)$.
    \end{itemize}

    Accordingly, in order to finish this subcase, we need to compare the values $\NGG(C'_n)$ and $\NGG(C''_n)$. A simple calculation reveals that $\NGG(C'_n) > \NGG(C''_n)$ if and only if $k \geq 8$, that is, $n \geq 17$.
    However, bear in mind that we still have to compare the acquired normalized indices to those of the trees so as to conclude the proof.

    \textbf{Subcase 2.2:} \emph{$G$ is a tree.} As aforementioned, it follows by Theorem~\ref{thm:path-minGG_trees} that $G$ is a path $P_n$.
    Hence, we proceed in a similar vein as in the first case.
    For this reason, we begin by rewriting the inequality $\NGG(P_n) > \NGG(C'_n)$ in terms of
    \begin{equation*}
        \sum_{i=1}^{2k} 1/\sqrt{i(2k+1-i)} > 1/\sqrt{2k} + 2k/\sqrt{k(k+1)}.
    \end{equation*}
    The validity of this inequality for $k \geq 4$ is an immediate consequence of the relation
    \begin{equation*}
        1/\sqrt{2k} + 2/\sqrt{2(2k-1)} + 1/\sqrt{3(2k-2)} > 5/\sqrt{k(k+1)},
    \end{equation*}
    which holds for all $k \geq 4$ (or equivalently, $n \geq 9$), and the chain of inequalities
    \begin{equation*}
        1/\sqrt{2k} > 1/\sqrt{2(2k-1)} > \cdots > 1/\sqrt{k(k+1)}.
    \end{equation*}

    Direct calculation shows that $\NGG(P_7) \approx 2.0263 < 2.1403 \approx \NGG(C'_7)$ and $\NGG(P_5) \approx 1.8165 < 2.1330 \approx \NGG(C'_5)$.

    To recap: for odd $n$, $\NGG(P_n) < \NGG(C'_n) < \NGG(C''_n)$ for $n \leq 7$, $\NGG(C'_n) < \NGG(P_n)$ and $\NGG(C'_n) < \NGG(C''_n)$ for $9 \leq n \leq 15$, and $\NGG(C''_n) < \NGG(C'_n) < \NGG(P_n)$ for $n \geq 17$.
\end{proof}

As a direct consequence of the proof of Theorem~\ref{thm:min-bipartite}, we obtain the minimum values of the (normalized) Graovac-Ghorbani index.

\begin{corollary}
    The minimum values of the normalized Graovac-Ghorbani index over the set of all bipartite graphs on $n \geq 8$ vertices are
    \begin{equation*}
        \NGG(G) =  \GG(G)/\sqrt{n-2}=
            \left\{
                \begin{array}{ll}
                    2 & \quad \text{for} \ n \geq 8 \ \text{even}, \\
                    (n-1)^{-1/2} + (n-1) \cdot N^{-1} & \quad \text{for} \ 9 \leq n \leq 15 \ \text{odd}, \\
                    (n+1) \cdot N^{-1} & \quad \text{for} \ n \geq 17 \ \text{odd},
                \end{array}
            \right.
    \end{equation*}
    where $N = \displaystyle\sqrt{\floor{n/2} \ceil{n/2}}$.
\end{corollary}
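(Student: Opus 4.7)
The plan is to extract this corollary directly from Theorem~\ref{thm:min-bipartite} by evaluating $\NGG$ on the unique extremal graph in each regime, then converting to $\GG$ via Proposition~\ref{prop:NGG_bprt}, which in this bipartite setting gives $\GG(G)=\sqrt{n-2}\cdot\NGG(G)$. So the only work is to compute $\NGG$ on $C_n$ (for even $n\ge 8$), on $C'_n$ (for odd $9\le n\le 15$), and on $C''_n$ (for odd $n\ge 17$).

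For even $n\ge 8$ the minimizer is the cycle $C_n$. Since $C_n$ is bipartite, $n_u+n_v=n$ for every edge, and by the rotational symmetry of the cycle each edge has $n_u=n_v=n/2$. Summing the $n$ equal contributions gives $\NGG(C_n)=n\cdot\tfrac{2}{n}=2$, matching the first case.

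For odd $9\le n\le 15$ the minimizer is $C'_n$, a cycle $C_{n-1}$ with one pendant edge. The pendant edge has endpoints with $n_u=1$ and $n_v=n-1$, contributing $1/\sqrt{n-1}$. For each of the $n-1$ edges of the even cycle $C_{n-1}$, bipartiteness again forces $n_u+n_v=n$; a short check shows that the pendant vertex is on exactly one side of every cycle edge partition, giving $\{n_u,n_v\}=\{(n-1)/2,(n+1)/2\}$ and hence contribution $1/N$ each. Summing yields $\NGG(C'_n)=(n-1)^{-1/2}+(n-1)\,N^{-1}$, which is the stated second case.

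For odd $n\ge 17$ the minimizer is $C''_n$, the even cycle $C_{n-1}$ with an additional vertex $w$ adjacent to two vertices of $C_{n-1}$ at cyclic distance $2$; it has $|E(C''_n)|=n+1$. I would argue that since $w$ merely offers a length-$2$ alternative between two vertices already at distance $2$ on the cycle, \emph{no} distance between two cycle vertices changes, and one only needs to locate $w$ on the two sides of each edge partition. A case check over the three edge types (the two new edges to $w$, the cycle edges incident to the attachment points, and the remaining cycle edges) then shows that every edge satisfies $\{n_u,n_v\}=\{(n-1)/2,(n+1)/2\}$ and so contributes $1/N$; hence $\NGG(C''_n)=(n+1)\,N^{-1}$. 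Multiplying each of the three values by $\sqrt{n-2}$ finishes the $\GG$ version. The only mildly delicate step is the $C''_n$ edge analysis; I would handle it by first observing the distance-preservation remark above so that the only new bookkeeping is the placement of $w$ itself on each side of each edge.
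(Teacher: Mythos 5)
Your proposal is correct and follows essentially the same route as the paper: the corollary is read off from Theorem~\ref{thm:min-bipartite} by evaluating $\NGG$ on the extremal graphs $C_n$, $C'_n$, $C''_n$ and applying Proposition~\ref{prop:NGG_bprt}, exactly the calculations the paper declares ``straightforward'' and omits. Your edge-by-edge bookkeeping (including the observation that attaching $w$ in $C''_n$ preserves all distances between cycle vertices, so only $w$'s placement matters) is sound and yields the stated values.
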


\section{Concluding comments} \label{sec:conclusion}

The Graovac-Ghorbani index is a distance-based analog of the atom-bond connectivity index, one of the most meaningful degree-based molecular structure descriptors.
In this work, we have characterized extremal bipartite graphs with respect to the Graovac-Ghorbani index.
There remain several open problems regarding the extremal graphs within other classes of graphs.
Here we would like to draw attention to extremal graphs and trees with a given maximum degree $\Delta \ll n$.
Recall that these graphs are of significant importance in chemical and pharmacological applications.
In the sequel, we present three conjectures.

For the first one, we need the notion of an (almost) $k$-regular graph. A graph is said to be {\em regular} if all vertices have the same degree.
A regular graph is $k$-regular if every vertex has degree $k$. We say that a graph is {\em almost $k$-regular} if all vertices have degree $k$ except for one which has degree $k-1$.

\begin{conjecture} \label{conj1}
    Let $G$ be a graph with maximal $\GG$ index amongst all graphs on $n \gg \Delta$ vertices.
    Then $G$ is an (almost) $\Delta$-regular graph.
\end{conjecture}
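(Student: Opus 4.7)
My approach is to establish an \emph{edge-addition lemma}: for any graph $G$ on $n$ vertices with maximum degree at most $\Delta$ (and $n \gg \Delta$) and any non-adjacent vertices $x, y$ satisfying $d_G(x), d_G(y) < \Delta$, adding the edge $xy$ strictly increases the $\GG$ index. Once this lemma is in hand, the conjecture follows by an exchange argument applied to the $\GG$-maximizer.

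For the lemma, set $G' = G + xy$ and decompose
\begin{equation*}
\GG(G') - \GG(G) \;=\; \sqrt{\frac{n'_x + n'_y - 2}{n'_x n'_y}} \;+\; \sum_{uv \in E(G)} \delta_{uv},
\end{equation*}
where primes denote quantities computed in $G'$ and $\delta_{uv}$ is the change in the $uv$-summand. The new-edge term is strictly positive; the challenge is controlling $\sum \delta_{uv}$, which is genuinely signed, because adding $xy$ can only shorten distances, and so a vertex previously strictly closer to $u$ may become equidistant to or closer to $v$ through the new shortcut. The plan is to localize the effect: only vertices $w$ whose shortest $u$--$w$ or $v$--$w$ path in $G'$ traverses $xy$ can contribute to $\delta_{uv}$. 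A BFS argument combined with the bound $|B_r(x)|, |B_r(y)| \leq \Delta^{r}$ confines the affected vertices to bounded-radius neighborhoods of $\{x, y\}$, so the number of edges $uv$ with $\delta_{uv} \neq 0$ is polynomial in $\Delta$ and independent of $n$. Expanding $\sqrt{(n_u + n_v - 2)/(n_u n_v)}$ to first order in the small shifts of $n_u, n_v$ (both of which are of order $\Theta(n)$ for the relevant edges), the aggregate perturbation is $o(1/\sqrt{n})$ and hence dominated by the new-edge contribution, which is of order $\Theta(1/\sqrt{n})$.

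With the lemma established, the conjecture follows by standard exchange reasoning. If the $\GG$-maximizer $G$ contains two vertices of degree $< \Delta$ that are non-adjacent, the lemma immediately contradicts maximality. The only remaining case is that all sub-$\Delta$-degree vertices form a clique $Q$ of size at most $\Delta$; here, a short combinatorial argument in the regime $n \gg \Delta$ shows that a local edge swap---deleting an edge between two degree-$\Delta$ vertices outside $Q$ and reinserting a suitable edge toward $Q$---creates a non-adjacent pair of sub-$\Delta$ vertices to which the lemma can again be applied, forcing a contradiction. The principal obstacle is the edge-addition lemma itself: the non-local nature of distance changes under an edge insertion demands careful BFS-tree bookkeeping, and the hypothesis $n \gg \Delta$ is essential in order for the aggregate perturbation of existing edges to remain negligible compared to the strictly positive new-edge gain.
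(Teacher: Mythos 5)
This statement is posed in the paper as an open conjecture; the paper offers no proof of it, so your argument has to stand entirely on its own. It does not, because the edge-addition lemma at its core is false. Take $G = P_n$ for even $n \geq 8$ and any $\Delta \geq 2$: the two endpoints $x,y$ have degree $1 < \Delta$, are non-adjacent, the maximum degree stays at most $\Delta$ after the insertion, and $G + xy = C_n$. Both graphs are bipartite, so by Proposition~\ref{prop:NGG_bprt} it suffices to compare normalized indices, and Table~\ref{table:NGG-Pn} together with $\NGG(C_n) = 2$ gives $\GG(P_n) > \GG(C_n)$ for all even $n \geq 8$; since $\NGG(P_n) \to \pi$ while $\NGG(C_n) = 2$, adding this single admissible edge \emph{decreases} the $\GG$ index by $\Theta(\sqrt{n})$. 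The failure traces to your locality claim. A new edge $xy$ reroutes the shortest path between any pair $(w,u)$ with $d_G(w,x) + 1 + d_G(y,u) < d_G(w,u)$ (or the symmetric condition), and such pairs need not lie in any bounded-radius neighbourhood of $\{x,y\}$: in the path example $\Theta(n^2)$ pairs change distance, $\Theta(n)$ edges $uv$ have $\delta_{uv} \neq 0$, and the individual shifts in $n_u$ are of order $\Theta(n)$ (the quantities $i$ and $n-i$ both collapse to $n/2$). The bound $|B_r(x)| \leq \Delta^r$ controls how many vertices are \emph{near} $x$, not how many shortest paths are rerouted \emph{through} $x$. A secondary issue is that the new-edge term need not be positive at all: it vanishes whenever every other vertex of $G'$ is equidistant from $x$ and $y$, as happens when $G$ admits an automorphism exchanging them.

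Even apart from the explicit counterexample, the strategy of monotone edge addition is structurally at odds with the paper's own findings: Conjecture~\ref{conj2} asserts that $C_n$ \emph{minimizes} $\GG$ in the regime $n \gg \Delta$, and Theorem~\ref{thm:min-bipartite} proves the bipartite case, so densifying a sparse graph toward regularity can push the index in either direction depending on how the added edges redistribute the quantities $n_u, n_v$ globally. Consequently the subsequent exchange argument has nothing to rest on. Any viable attack on Conjecture~\ref{conj1} must control these global distance effects directly---for instance by bounding $\GG$ from above over the whole class and exhibiting near-tightness for (almost) $\Delta$-regular graphs, in the spirit of the Cauchy--Schwarz argument in Theorem~\ref{thm:bipartite_max}---rather than by treating an edge insertion as a local perturbation.
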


In contrast, in the second conjecture, we consider graphs with a given maximum degree $\Delta \ll n$ that minimize the Graovac-Ghorbani index.

\begin{conjecture} \label{conj2}
    Let $G$ be a graph with minimal $\GG$ index amongst all graphs on $n \gg \Delta$ vertices. Then $G$ is the cycle $C_n$.
\end{conjecture}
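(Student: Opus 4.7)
The plan is to prove that $\GG(G) \geq \GG(C_n)$ for every connected graph $G$ on $n$ vertices with $\Delta(G) \leq \Delta$, once $n$ is sufficiently large compared to $\Delta$, with equality only when $G \cong C_n$. I would first compute $\GG(C_n)$ directly: for even $n$ each edge splits the cycle into halves, so $n_u = n_v = n/2$ and $\GG(C_n) = 2\sqrt{n-2}$; for odd $n$ every edge has a unique antipodal equidistant vertex, so $n_u = n_v = (n-1)/2$ and $\GG(C_n) = 2n\sqrt{n-3}/(n-1)$. Both expressions are asymptotic to $2\sqrt{n}$.

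The core ingredient is the AM--GM inequality $\sqrt{n_u n_v} \leq (n_u + n_v)/2$, which gives, for every edge $uv$ with $s = n_u + n_v$,
\begin{equation*}
    \sqrt{\frac{s-2}{n_u n_v}} \;\geq\; h(s), \qquad h(s) = \frac{2\sqrt{s-2}}{s},
\end{equation*}
with equality if and only if $n_u = n_v$. The function $h$ peaks at $s = 4$ and is strictly decreasing for $s \geq 4$; apart from the degenerate situation of adjacent distance-twins (which forces $s = 2$), one has $s \geq 3$, and for large $n$ this yields $h(s) \geq h(n) = 2\sqrt{n-2}/n$. Summing over edges produces the working estimate $\GG(G) \geq |E(G)| \cdot h(n)$.

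The analysis then splits by structure. If $G$ is a tree, Theorem~\ref{thm:path-minGG_trees} gives $\GG(G) \geq \GG(P_n)$, and the corollary in Section~\ref{sec:paths} yields $\GG(P_n) \sim \pi \sqrt{n-2}$, which strictly exceeds $\GG(C_n) \sim 2\sqrt{n-2}$ for $n$ large. If $G$ is not a tree then $|E(G)| \geq n$, so $\GG(G) \geq 2\sqrt{n-2}$; for even $n$ this equals $\GG(C_n)$, and the equality analysis (every edge bipartite with $s = n$ and $n_u = n_v = n/2$, and $|E(G)| = n$) forces $G \cong C_n$. For odd $n$, where $\GG(C_n)$ strictly exceeds the crude bound $2\sqrt{n-2}$, one refines further: for bipartite $G$, Theorem~\ref{thm:min-bipartite} identifies the bipartite minimiser ($C'_n$ when $9 \leq n \leq 15$, $C''_n$ when $n \geq 17$) and a direct comparison of closed forms shows $\GG(C_n) < \min\{\GG(C'_n), \GG(C''_n)\}$; for non-bipartite $G$ with $|E(G)| \geq n+1$, the sharper bound $(n+1) \cdot h(n) > \GG(C_n)$, an elementary calculation valid for all odd $n$, closes the case.

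The main obstacle is the non-bipartite \emph{unicyclic} case on odd $n$, where $|E(G)| = n$ but $G \neq C_n$: then $G$ consists of an odd cycle of length $g < n$ with a pendant tree of $n - g$ vertices attached. Here edge-counting alone is too crude, and a structural argument in the spirit of the proof of Theorem~\ref{thm:min-bipartite} is required. The key observation is that the pendant tree must contain a leaf-edge whose contribution is $\sqrt{(n-2)/(n-1)} \approx 1$, dwarfing the per-edge average $\sim 2/\sqrt{n}$ achieved by $C_n$; the bounded-degree hypothesis $\Delta \ll n$ constrains the shape of the pendant tree and allows one to show that the remaining edges cannot compensate for this excess. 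Dispatching the stray twin-edge configurations — which the hypothesis $\Delta \ll n$ makes structurally marginal — then completes the proof.
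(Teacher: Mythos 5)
This statement is Conjecture~\ref{conj2}; the paper offers no proof of it (it is posed as an open problem), so there is no ``paper proof'' to compare against and your text has to stand entirely on its own. As written it is an outline with one decisive gap. Your central estimate $\GG(G)\ge |E(G)|\cdot h(n)$ with $h(s)=2\sqrt{s-2}/s$ fails precisely on the edges you set aside: edges $uv$ whose endpoints are adjacent twins ($N[u]=N[v]$, equivalently $n_u=n_v=1$), which contribute exactly $0$. These are not ``stray'' or ``structurally marginal'' --- they are the whole content of the conjecture. Without the hypothesis $\Delta\ll n$ the minimizer is $K_n$, where \emph{every} edge is of this type and $\GG(K_n)=0$; and your sketch never invokes the degree bound anywhere except in the sentence that waves these edges away. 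Concretely, a triangle with a pendant tree (any parity of $n$) has $|E(G)|=n$ but one zero edge, so your bound yields only $(n-1)h(n)<\GG(C_n)$ and the equality analysis that is supposed to force $G\cong C_n$ never gets off the ground; a chain of diamonds has $\Theta(n)$ zero edges. A repair seems possible --- e.g.\ contract twin classes (the degree bound keeps them of size at most $\Delta$, so a spanning tree of the quotient gives at least $n-\Delta$ non-twin edges, each contributing at least $h(n)$) and then exhibit compensating contributions (leaf edges, or an excess of non-twin edges over $n$) to recover strictness --- but that is exactly the work a proof of this conjecture would consist of, and the proposal does not do it.

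Two smaller points. In the odd unicyclic non-bipartite case your leaf-edge observation is sound (a pendant edge $uw$ always has $n_w=1$, $n_u=n-1$, hence contributes $\sqrt{(n-2)/(n-1)}$, and indeed $\sqrt{(n-2)/(n-1)}+(n-2)h(n)>2n\sqrt{n-3}/(n-1)$ for large $n$), but the degree bound plays no role there, contrary to your claim that it ``constrains the shape of the pendant tree''; what you need is only that non-twin edges contribute at least $h(n)$ and that a leaf exists, so the emphasis is misplaced --- the place where $\Delta\ll n$ must really enter is the twin-edge analysis above. Also, several of your comparisons ($\GG(P_n)\sim\pi\sqrt{n-2}$ versus $2\sqrt{n-2}$, the odd-$n$ bound $(n+1)h(n)>\GG(C_n)$, and the use of Theorem~\ref{thm:min-bipartite} together with $\GG(C_n)<\GG(C'_n),\GG(C''_n)$) are correct but only settle the statement for $n$ beyond some threshold, which should be stated explicitly since it is what the hypothesis $n\gg\Delta$ is meant to absorb; the explicit values $\GG(C_n)=2\sqrt{n-2}$ (even $n$) and $2n\sqrt{n-3}/(n-1)$ (odd $n$), the AM--GM step, and the monotonicity of $h$ are all fine.
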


In order to formulate the last conjecture, we need the following definitions.
The {\em level or depth} of a vertex in a rooted tree is the length of the path from the vertex to the root of the tree.
In a {\em breadth-first traversal} of a rooted tree, the vertices are visited level by level from the root to the bottom, each level being traversed from left to right.
An {\em almost dendrimer} $T_{n, d}$ is a rooted tree with $n$ vertices in which every non-pendant vertex, except perhaps one, has degree $d$ and in which inequality $d(u) \geq d(v)$ holds for every vertex $u$ that occurs before vertex $v$ in the breadth-first traversal (cf. Figure~\ref{fig-dendrimer} for an illustration).

\begin{figure}[H]
    \begin{center}
        \includegraphics[scale=0.65]{./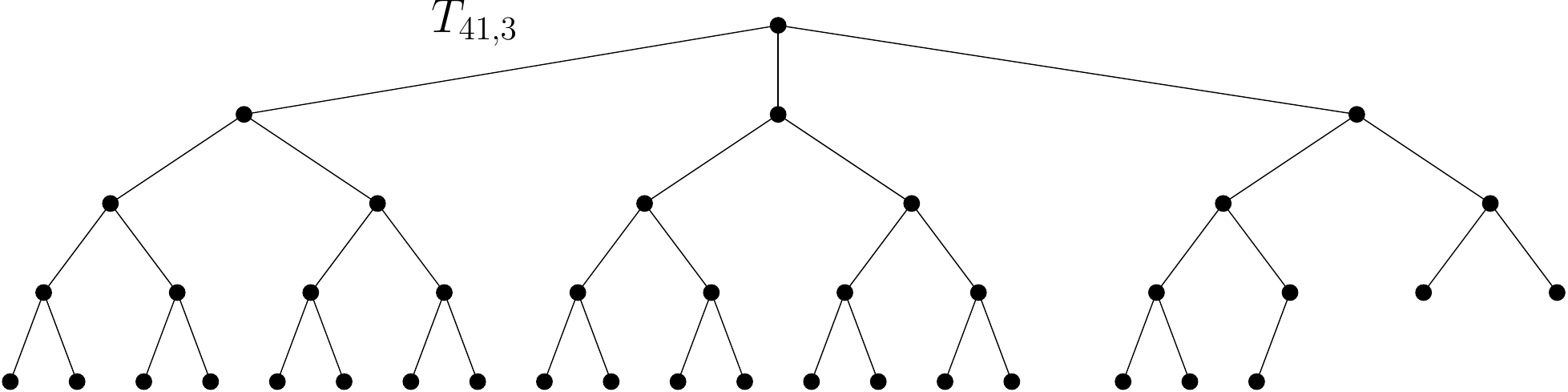}
        \caption{An almost dendrimer $T_{41,3}$.}
        \label{fig-dendrimer}
    \end{center}
\end{figure}

\begin{conjecture} \label{conj3}
    Let $G$ be a tree with maximal $\GG$ index amongst all trees on $n$ vertices with maximum degree $\Delta \leq n-1$. Then $G$ is
    an almost dendrimer $T_{n, \Delta}$.
\end{conjecture}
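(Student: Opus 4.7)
The plan is to reduce $\GG$ to $\NGG$ via Proposition~\ref{prop:NGG_bprt} (trees are bipartite) and then maximise $\NGG$ in two stages: first pinning down the forced degree sequence, then pinning down the forced BFS arrangement. For an edge $e=uv$ of a tree $T$, removing $e$ splits $T$ into two components of orders $n_u$ and $n - n_u$, so $e$ contributes $g(n_u) := 1/\sqrt{n_u(n-n_u)}$ to $\NGG(T)$. The function $g$ is strictly convex and symmetric about $n/2$ on $[1,n-1]$ and strictly decreasing on $[1, \lfloor n/2 \rfloor]$, so pendant edges contribute the maximal value $1/\sqrt{n-1}$ while balanced edges contribute the least. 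Maximising $\NGG$ therefore pushes the tree towards having many extremely unbalanced edges, and the cap $\Delta$ is precisely what prevents the extremum from degenerating to the star.

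\medskip

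For Stage~1 I would use a handshake count to show that any tree of order $n$ with maximum degree $\leq \Delta$ has at most $((\Delta-2)n+2)/(\Delta-1)$ leaves, with equality iff every internal vertex has degree $\Delta$ with at most one exception. To force this degree sequence on any $\NGG$-maximiser, the tool is a leaf-transplantation lemma: if an internal vertex $w$ has degree $<\Delta$ while another vertex $u$ at greater BFS depth has a pendant neighbour $\ell$, detach $\ell$ from $u$ and re-attach it to $w$, then compare $\NGG$-values by tracking the induced shifts in $n_e$ along the $u$--$w$ path. For Stage~2, once the degree sequence is fixed, the tool is a Kelmans-style swap: if $u$ precedes $v$ in the BFS order but $\deg_T(u) < \deg_T(v)$, transplant one of $v$'s pendant subtrees onto $u$, and again compare $\NGG$-values. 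One aims to show that each such swap is strictly monotone in $\NGG$, so that exhaustive application terminates, up to isomorphism, at $T_{n,\Delta}$.

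\medskip

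The main obstacle is establishing the strict monotonicity that underpins both swap families. Moving a leaf or a pendant subtree simultaneously shifts the $n_e$ of every edge on the path between source and destination, and the direction of this shift is not uniform: along that path the source-side component shrinks by the size of the moved piece while the destination-side grows, so some edges become more balanced (lowering their $g$-value) while others become more unbalanced (raising theirs). A clean argument will likely hinge on either a telescoping identity pitting the opposing changes against each other favourably, or on a majorisation statement asserting that the multiset $\{(n_u,n_v)\}_{uv \in E}$ produced by $T_{n,\Delta}$ dominates, in a sense compatible with the convex function $g$, that of any competitor. A further subtlety is the uniqueness claim: when several non-equivalent swaps are simultaneously available, one must confirm that they all drive the tree to the same canonical BFS packing, which requires a careful invariant recording the BFS degree sequence together with the subtree sizes at each level.
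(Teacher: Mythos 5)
There is a genuine gap here, and it is worth noting first that the statement you are proving is Conjecture~\ref{conj3} of the paper: the authors state it as an open problem and give no proof, so there is nothing to compare your argument against --- it would have to stand entirely on its own. As written, it does not: what you give is a programme, and the decisive steps are exactly the ones you defer. Both Stage~1 and Stage~2 rest on strict monotonicity of a transplantation move (moving a leaf, resp.\ a pendant subtree, closer to the root/to an earlier BFS position increases $\NGG$), and you yourself identify why this is hard: moving a piece of size $s$ changes $n_e$ by $\pm s$ on every edge $e$ of the source--destination path, making some of those edges more balanced (decreasing their contribution $1/\sqrt{n_e(n-n_e)}$) and others less balanced (increasing it). No telescoping identity or majorisation statement is supplied, and neither is known; this sign-mixing along the path is precisely the obstruction that leaves the conjecture open, so asserting that ``one aims to show'' the swap is monotone is not a proof of anything. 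The elementary ingredients you do verify (the edge-split formula for trees, convexity and monotonicity of $g$, the leaf-count bound $L \le ((\Delta-2)n+2)/(\Delta-1)$ with near-equality forcing all but at most one internal vertex to have degree $\Delta$) are correct but only set the stage.

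A further structural problem sits in Stage~2. Once Stage~1 has (hypothetically) forced the degree sequence, every competitor has all internal degrees equal to $\Delta$ (with at most one exception) and all other vertices of degree one; a Kelmans-style swap triggered by the condition $\deg(u)<\deg(v)$ with $u$ earlier in BFS order then either never fires among internal vertices or fires only with $u$ a leaf, in which case performing it destroys the degree sequence Stage~1 just established. In particular, this rule cannot distinguish between trees with identical degree sequences but very different shapes --- e.g.\ a caterpillar whose spine vertices each carry $\Delta-2$ pendant leaves versus the level-by-level packing of $T_{n,\Delta}$ --- yet that is exactly the comparison the conjecture requires. The invariant you would need to push towards the almost dendrimer concerns subtree sizes and depths (the multiset of edge splits), not vertex degrees, and controlling how that multiset moves under any local modification is again the unresolved core of the problem. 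So the proposal is a reasonable heuristic outline, but it contains no step that closes the conjecture, and the parts that are missing are the parts that make it a conjecture.
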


\bigskip

\noindent
{\bf{Acknowledgements.} }We are grateful to Matjaž Konvalinka for helpful comments and suggestions.

The research was supported by the ARRS Program P1-0383.

%
%
 %

\end{document}